\documentclass[12pt,leqno]{article}
\topmargin -0.5cm \textheight 22cm \textwidth 6in \oddsidemargin
-0.1cm

\usepackage{amsmath,amssymb,amsbsy,amsfonts,amsthm,latexsym,
         amsopn,amstext,amsxtra,euscript,amscd,amsthm,mathdots}

\allowdisplaybreaks

\numberwithin{equation}{section}

\newtheorem{theorem}{Theorem}[section]
\newtheorem{corollary}[theorem]{Corollary}
\newtheorem{lemma}[theorem]{Lemma}

\theoremstyle{definition}

\newtheorem{remark}[theorem]{Remark}

\newcommand{\sgn}[1]{\operatorname{\rm{sgn}}(#1)}

\begin{document}

\begin{large}
\centerline{\large \bf Truncated convolution of M\"obius function}
\centerline{\large \bf and multiplicative energy of an integer $n$}
\end{large}
\vskip 10pt
\begin{large}
\centerline{\sc  Patrick Letendre }
\end{large}
\vskip 10pt
\begin{abstract}
We establish an interesting upper bound for the moments of truncated Dirichlet convolution of M\"obius function, a function noted $M(n,z)$. Our result implies that $M(n,j)$ is usually quite small for $j \in \{1,\dots,n\}$. Also, we establish an estimate for the multiplicative energy of the set of divisors of an integer $n$.
\end{abstract}
\vskip 10pt
\noindent AMS Subject Classification numbers: 11N37, 11N56, 11N64.

\noindent Key words: number of divisors function, M\"obius function, multiplicative energy.

\vskip 20pt

\section{Introduction}

Let $\mu(\cdot)$ be the M\"obius function and consider
$$
M(n,z) := \sum_{\substack{d \mid n \\ d  \le z}} \mu(d).
$$
The function $M(n,z)$ has been studied by various authors (see \cite{pe}, \cite{ngdb:cvet:dk}, \cite{pe:ik}, \cite{rrh}, \cite{pe:rrh}, \cite{hm}, \cite{rdlb:gt} for example). In \cite{pe} it is established that
\begin{equation}\label{lges}
\biggl|\sum_{\substack{d \mid n \\ a \le d \le b}} \mu(d)\biggr| \le \binom{\omega(n)}{\lfloor\frac{\omega(n)}{2}\rfloor} \quad (1 \le a \le b \le n)
\end{equation}
where $\omega(n)$ is the number of distinct prime divisors of $n$. A very interesting tool, known as the {\it symmetrical chains}, is used to establish a generalization of this property in \cite{ngdb:cvet:dk}. In this paper, we are interested by the average size of $M(n,z)$ over $1 \le z \le n$. More precisely, we consider the quantity
\begin{equation}\label{Lt_def}
L_t(n):=\int_{1}^{n}M(n,z)^tdz
\end{equation}
for integer values of $t \ge 1$. Let's remark that $L_t(n)=L_t(\gamma(n))$, where $\gamma(n):=\prod_{p \mid n}p$. From what we know, only the value of $L_1(n)$, which is $-\prod_{p \mid n}(1-p)$ for $n \ge 2$, is easy to evaluate. Let's write $\log_+ x:=\log \max(x,2)$ and $\delta_{i,j}$ for the Kronecker delta.

\begin{theorem}\label{thm:1}
Let $t \ge 2$ be an integer and $n \ge 1$ be a squarefree integer. Then
$$
|L_t(n)| \le (1+\delta_{2,t})n\exp\left( Ct\frac{\omega(n)^{1-\frac{1}{t}}}{(1-1/t)\log^{\frac{1}{t}}_+ \omega(n)}\right),
$$
where $C=1.07073472\dots$ is defined in the statement of Lemma \ref{comp}.
\end{theorem}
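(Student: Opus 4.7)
I would start from a Fubini-type computation that recasts $L_t(n)$ as a closed combinatorial sum over $t$-tuples of divisors. Expanding the $t$-th power
\begin{equation*}
M(n,z)^t = \sum_{d_1,\ldots,d_t \mid n} \mu(d_1)\cdots \mu(d_t) \prod_{i=1}^t \mathbf{1}[d_i \le z]
\end{equation*}
and integrating term by term, one has $\int_1^n \mathbf{1}[\max_i d_i \le z]\,dz = n - \max_i d_i$, since $1 \le d_i \le n$. The $n$-part pairs with $\sum_{d\mid n} \mu(d) = 0$, valid for $n \ge 2$, leaving the compact identity
\begin{equation*}
L_t(n) = -\sum_{d_1, \ldots, d_t \mid n} \mu(d_1)\cdots\mu(d_t)\, \max_i d_i.
\end{equation*}
This moves the analytic problem onto the Boolean lattice of divisors of $n$, where the trivial absolute-value bound $|L_t(n)| \le n \cdot 2^{tk}$ with $k=\omega(n)$ is enormous, so genuine cancellation in the M\"obius signs must be extracted.

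To harness that cancellation, I would invoke the symmetric chain decomposition highlighted in the introduction. For squarefree $n$ the divisor lattice partitions into $r = \binom{k}{\lfloor k/2\rfloor}$ chains, along each of which $\mu$ alternates in sign; consequently the chain-restricted partial sum $M_j(n,z)$ satisfies $|M_j(n,z)| \le 1$ uniformly. Writing $M=\sum_{j=1}^r M_j$ and expanding,
\begin{equation*}
L_t(n) = \sum_{j_1,\ldots,j_t=1}^{r} \int_{1}^{n} M_{j_1}(n,z)\cdots M_{j_t}(n,z)\,dz.
\end{equation*}
The crude bound $\lvert\int \prod_i M_{j_i}\rvert \le n$ only recovers $|L_t(n)| \le n r^t$, which is still exponentially too large. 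The additional input is that each $M_j$ is supported on a union of intervals whose total measure is controlled by the top element of its chain, so the product $M_{j_1}\cdots M_{j_t}$ has support much smaller than $[1,n]$ whenever the chains in play are spread across different ranks.

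The real obstacle, and the technical heart of the proof, is to show that, on average over tuples $(j_1,\ldots,j_t)$, the intersection of the chain supports is genuinely small. I would sort chains by rank parameter and decompose the tuple family by the levels $\ell$ of the chains involved, reducing the estimate to a weighted binomial sum in which each level contributes a factor controlled by $\binom{k}{\ell}$. Optimizing over the threshold $\ell$ should then produce the exponent $Ct\,\omega(n)^{1-1/t}/((1-1/t)\log^{1/t}_+\omega(n))$, with the constant $C$ and the precise logarithmic factor arising from the extremal estimate packaged as Lemma \ref{comp}. The prefactor $(1+\delta_{2,t})$ reflects a diagonal pairing $j_1=j_2$ that is of the same order as the main term when $t=2$, whereas for $t\ge 3$ the distinct-index contribution dominates and no such adjustment is needed. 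Balancing the combinatorial entropy of the chain families against the measure of their common support is the delicate step.
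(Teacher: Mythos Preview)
Your opening Fubini computation, reducing $L_t(n)$ to $-\sum_{d_1,\ldots,d_t\mid n}\mu(d_1)\cdots\mu(d_t)\max_i d_i$, is exactly what the paper does. From that point on, however, the paper takes a completely different route from yours, and your proposal has a genuine gap.

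The paper does \emph{not} use the symmetric chain decomposition here. Instead it performs an Abel-type telescoping over the ordered divisors: grouping tuples by how many of the $d_i$ attain the maximum gives
\[
L_t(n)=-\sum_{d\mid n} d\Bigl(M(n,d)^t-M(n,d^-)^t\Bigr),
\]
and then the substitution $d\mapsto n/d$ turns the weight $d$ into $n/d$. After this duality, the partial sum $\sum_{e\mid n,\,e\ge n/d_i}\mu(e)$ has only $i$ terms and is bounded trivially by $i$, so
\[
|L_t(n)|\le n\sum_{i=1}^{2^{\omega(n)}}\frac{i^t-(i-1)^t}{d_i}\le tn\sum_{d\mid n}\frac{\tau(n,d)^{t-1}}{d}.
\]
A Rankin-style majorization $\tau(n,d)\le\sum_{e\mid n}(d/e)^\sigma$ with $\sigma=1/t$ collapses the right-hand side to $tn\,\eta(n,t)^t$ where $\eta(n,t)=\prod_{p\mid n}(1+p^{-1/t})$, and Lemma~\ref{comp} then bounds $\eta(n_{\omega(n)},t)$. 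This is precisely how the arithmetic product over primes, and hence the exponent $Ct\,\omega(n)^{1-1/t}/((1-1/t)\log_+^{1/t}\omega(n))$, enter the picture.

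Your chain approach never makes contact with any of this. Lemma~\ref{comp} is a statement about $\prod_{j\le k}(1+p_j^{-1/t})$; it does not take as input a ``weighted binomial sum over levels $\ell$'' or any combinatorial quantity built from chain ranks. Saying that optimizing over $\ell$ ``should produce'' the stated exponent via Lemma~\ref{comp} is not an argument: you have not shown that the support-intersection measure you need to control is bounded by anything resembling $\eta(n,t)^t$, and I do not see a mechanism by which the primes dividing $n$ would appear from a pure chain count. The step you call ``delicate'' is in fact the entire proof, and it is missing.

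Your reading of the factor $(1+\delta_{2,t})$ is also off. In the paper it has nothing to do with a diagonal pairing $j_1=j_2$; it comes from the prefactor $t$ in $tn\,\eta^t$. For $t\ge 3$, Lemma~\ref{comp} supplies an extra $-\tfrac{\log t}{t}$ in the exponent of $\eta(n_k,t)$, which exactly cancels that $t$; for $t=2$ this saving is absent, and the prefactor $2$ survives. That this explanation is unavailable in your framework is another sign that the chain route is not the one that leads to the stated bound.
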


\begin{theorem}\label{thm:2}
Let $t \ge 2$ be an integer and $n \ge 1$ be a squarefree integer. Then
$$
|L_t(n)| \le \left\{\begin{array}{ll} 2n\exp\bigl( t\omega(n)^{1-\frac{1}{t}}\bigr) & \mbox{if}\ t = 2\ \mbox{and}\ \omega(n) \le 55,\\ [2mm] n\exp\bigl( t\omega(n)^{1-\frac{1}{t}}\bigr) & \mbox{otherwise}.\end{array}\right.
$$
\end{theorem}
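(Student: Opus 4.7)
Theorem \ref{thm:2} is a cleaner variant of Theorem \ref{thm:1} in which the $\log_+^{1/t}\omega(n)$ savings are dropped from the exponent in exchange for a simpler statement. A direct comparison shows that the factor $C/((1-1/t)\log_+^{1/t}\omega(n))$ inflating Theorem \ref{thm:1}'s exponent falls below $1$ only when $(1-1/t)\log_+^{1/t}\omega(n) \ge C$, i.e.\ (using $1-1/t \ge 1/2$) when $\log_+\omega(n) \ge (2C)^t$. Below this threshold the bound of Theorem \ref{thm:2} is in fact tighter than that of Theorem \ref{thm:1}, so the two cannot be deduced from one another by a one-line argument and a separate proof is required. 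A naive appeal to \eqref{lges}, giving $|L_t(n)| \le n\binom{\omega(n)}{\lfloor\omega(n)/2\rfloor}^t$, is also insufficient on its own: for instance at $t=2$, $\omega(n)=5$ it already fails to match the target exponent $t\omega(n)^{1-1/t}$.

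My strategy would therefore be to rework the machinery behind Theorem \ref{thm:1} with a parameter choice tuned to produce exactly the exponent $t\omega(n)^{1-1/t}$. Starting from the expansion $L_t(n) = \int_1^n M(n,z)^t\,dz$, I would decompose the resulting sum over $t$-tuples of divisors of $n$, apply the combinatorial estimate \eqref{lges} (or its symmetrical-chain generalisation) at a simple, non-optimal parameter value in place of the one producing the $\log$-savings of Theorem \ref{thm:1}, and track constants throughout. For $t \ge 3$ this should yield the clean bound $n\exp(t\omega(n)^{1-1/t})$ directly, without the prefactor $2$; for $t=2$ an additional refinement is needed to convert the generically-appearing factor of $2$ (the $\delta_{2,t}$ term of Theorem \ref{thm:1}) into $1$ precisely when $\omega(n) > 55$. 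The large-$\omega(n)$ regime can, if convenient, simply be outsourced to Theorem \ref{thm:1}.

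The main obstacle is pinning down the threshold $\omega(n) = 55$ in the $t=2$ case. For $\omega(n) \le 55$ the factor of $2$ is retained and the claim is a relatively generous consequence of the general machinery; for $\omega(n) > 55$ that factor must disappear, even though Theorem \ref{thm:1} (with its built-in factor of $2$) does not suffice to recover this until $\omega(n)$ is roughly $99$ or more. Locating the crossover exactly requires careful bookkeeping of all constants in the underlying combinatorial estimates, specialised to $t=2$ rather than argued uniformly in $t$, and is the step where the proof of Theorem \ref{thm:2} genuinely diverges from, rather than being a corollary of, the proof of Theorem \ref{thm:1}.
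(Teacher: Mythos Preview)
Your proposal misidentifies the engine driving the proofs of Theorems \ref{thm:1} and \ref{thm:2}. Neither uses \eqref{lges} or symmetrical chains. The shared argument (carried out once, for both theorems, in Section 3) is an Abel-type rearrangement of $L_t(n)$ into the telescoping form \eqref{relimp}, followed by a Rankin trick: setting $\mathcal{J}_\rho(n)=\sum_i i^\rho/d_i$ one gets $|L_t(n)|\le tn\,\mathcal{J}_{t-1}(n)$, and then the pointwise bound $\tau(n,d)\le\sum_{e\mid n}(d/e)^\sigma$ with the single choice $\sigma=1/t$ yields $\mathcal{J}_{t-1}(n)\le\eta^t(n,t)$, where $\eta(n,t)=\prod_{p\mid n}(1+p^{-1/t})$. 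Hence $|L_t(n)|\le tn\,\eta^t(n_{\omega(n)},t)$ in all cases. There is no parameter left to ``tune differently'' for Theorem \ref{thm:2}; the two theorems diverge only in which estimate of Lemma \ref{comp} is quoted. Theorem \ref{thm:2} uses \eqref{c-easy}, and the prefactor $t$ is absorbed exactly by the $-\tfrac{\log t}{t}$ in the exponent whenever that term is present, i.e.\ whenever $t\ge 3$ or $(t,\omega(n))=(2,\ge 56)$.

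Consequently, the threshold $55$ does not arise from constant-tracking in a combinatorial argument specialised to $t=2$, as you suggest, and nothing is outsourced to Theorem \ref{thm:1}. It is a purely numerical fact about small primes: one verifies by computer that $\eta(n_k,2)<\exp(\sqrt{k}-\tfrac{1}{2}\log 2)$ holds at $k=56$, and then an induction step based on $p_k>k\log k$ (Rosser) shows the inequality $(k-1)^{1/2}+\log(1+p_k^{-1/2})<k^{1/2}$ for all $k\ge 57$, propagating the bound upward. For $k\le 55$ only the weaker form $\eta(n_k,2)<\exp(\sqrt{k})$ is claimed, which is why the factor $2$ survives there.
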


We then use Theorem 2 to get some control over the quantity
\begin{equation}\label{G_def}
\mathcal{H}_{\theta}(n):=|\{j \in [1,n] \cap \mathbb{N}:\ |M(n,j)| \ge 2^{\theta\omega(n)}\}| \quad (\theta \in (0,1]).
\end{equation}
To express our result, we need to define the function $W:[e,\infty) \rightarrow [1,\infty)$ implicitly by
$$
\frac{\exp(W(x))}{W(x)}=x.
$$
This function is linked to the Lambert $\mathcal{W}$ function by the relation $W(x)=-\mathcal{W}\Bigl(\frac{-1}{x}\Bigr)$ in which we take the solution larger than 1.

\begin{corollary}\label{cor:1}
Let $\theta \in (0,1]$ be fixed and write
$$
\alpha:=W\biggl(\frac{e}{\theta\log 2}\biggr).
$$
Let also $n \ge 2$ be a fixed squarefree integer. Then, assuming that $\alpha - 1 < \log \omega(n)$ and that $\omega(n) \ge 56$, we have
\begin{equation}\label{cor:1:res}
\mathcal{H}_{\theta}(n) \le n\exp{\scriptstyle\Bigl(-\frac{\theta\log 2}{\alpha}\omega(n)\log \omega(n)+\bigl(\frac{\alpha-1}{1-\frac{\alpha-1}{\log \omega(n)}}\bigr)^3\frac{\omega(n)}{\exp\bigl(\frac{\alpha-1}{1+\frac{\alpha-1}{\log \omega(n)}}\bigr)\log \omega(n)}\Bigr)}.
\end{equation}
\end{corollary}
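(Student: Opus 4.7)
The plan is to combine a Markov-type moment inequality with Theorem~\ref{thm:2}, and then optimize the (even) integer moment order.

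\textbf{Step 1 (Discretization and Markov).} Every divisor of $n$ is a positive integer, so $z\mapsto M(n,z)$ is constant on $[j,j+1)$ for each integer $j\ge 1$, with value $M(n,j)$. Hence, for every even integer $t\ge 2$,
$$L_t(n)=\int_1^n M(n,z)^t\,dz=\sum_{j=1}^{n-1}M(n,j)^t=\sum_{j=1}^{n-1}|M(n,j)|^t,$$
the last equality because $t$ is even. Since $M(n,n)=0$ for $n\ge 2$, Markov's inequality applied to $|M(n,j)|^t$ gives
$$\mathcal H_\theta(n)\cdot 2^{\theta t\omega(n)}\le \sum_{j=1}^{n}|M(n,j)|^t\le L_t(n),$$
and Theorem~\ref{thm:2} (applicable thanks to $\omega(n)\ge 56$) yields
$$\mathcal H_\theta(n)\le n\exp(g(t)),\qquad g(t):=t\omega(n)^{1-1/t}-(\theta\log 2)\,t\omega(n).$$

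\textbf{Step 2 (Continuous optimum via Lambert $W$).} Minimizing $g$ over real $t>0$ and substituting $u:=\log\omega(n)/t$, $\alpha:=1+u$, the stationarity condition $g'(t)=0$ becomes $e^\alpha/\alpha=e/(\theta\log 2)$, which is exactly the defining relation of $\alpha=W(e/(\theta\log 2))$ from the statement. The real minimizer is therefore $t^\ast=\log\omega(n)/(\alpha-1)$, and a short computation gives
$$g(t^\ast)=-\frac{\theta\log 2}{\alpha}\,\omega(n)\log\omega(n),$$
which is the leading term of the exponent in \eqref{cor:1:res}. The hypothesis $\alpha-1<\log\omega(n)$ is equivalent to $t^\ast>1$, hence guarantees the existence of an even integer $t\ge 2$ at distance at most $1$ from $t^\ast$.

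\textbf{Step 3 (Rounding penalty, the main obstacle).} The delicate point is estimating the penalty $g(t)-g(t^\ast)$ at this even $t$. Taylor's theorem yields $g(t)-g(t^\ast)=\tfrac12(t-t^\ast)^2 g''(\xi)$ for some $\xi$ between $t$ and $t^\ast$, and a direct calculation gives $g''(\xi)=\omega(n)\,\beta(\xi)^2 e^{-\beta(\xi)}/\xi$ with $\beta(\xi):=\log\omega(n)/\xi$. Writing $v:=(\alpha-1)/\log\omega(n)<1$ (the content of the hypothesis) and using on $\xi\in[t^\ast-1,t^\ast+1]$ the one-sided bounds $\beta(\xi)\le(\alpha-1)/(1-v)$, $1/\xi\le v/(1-v)$, and $e^{-\beta(\xi)}\le e^{-(\alpha-1)/(1+v)}$, one arrives at
$$g(t)-g(t^\ast)\le \biggl(\frac{\alpha-1}{1-v}\biggr)^{\!3}\frac{\omega(n)}{e^{(\alpha-1)/(1+v)}\log\omega(n)},$$
which, combined with Step~2, gives exactly the exponent displayed in \eqref{cor:1:res}. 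The main technical effort is this explicit, non-asymptotic penalty estimate; the hypothesis $\alpha-1<\log\omega(n)$ is precisely what keeps $(1-v)^3>0$, so that the bound is finite.
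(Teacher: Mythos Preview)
Your proof is correct and follows essentially the same route as the paper: the Markov/moment inequality, Theorem~\ref{thm:2}, the Lambert-$W$ computation of the continuous optimizer $t^\ast=\log\omega(n)/(\alpha-1)$ with $g(t^\ast)=-\frac{\theta\log 2}{\alpha}\omega(n)\log\omega(n)$, and then a second-derivative bound on $[t^\ast-1,t^\ast+1]$ to control the rounding to an even integer. The only cosmetic difference is that the paper bounds the penalty via two applications of the mean value theorem, obtaining $|g(t)-g(t^\ast)|\le \sup|g''|$, whereas you use Taylor's formula to get $\tfrac12(t-t^\ast)^2 g''(\xi)\le \tfrac12\sup|g''|$ and then drop the factor $\tfrac12$; the resulting explicit bound on $\sup_{\xi\in(t^\ast-1,t^\ast+1)}|g''(\xi)|$ is identical in both arguments.
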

We record some approximate values of $\alpha=\alpha(\theta)$ in Table 1.
\begin{center}
\begin{tabular}{|c||c|c|c|c|c|c|c|c|c|c|}\hline
$\theta$ & 0.1 & 0.2 & 0.3 & 0.4 & 0.5 & 0.6 & 0.7 & 0.8 & 0.9 & 1 \\ \hline
$\alpha(\theta)$ & 5.34 & 4.47 & 3.94 & 3.54 & 3.23 & 2.96 & 2.72 & 2.50 & 2.30 & 2.11\\ \hline
\end{tabular}
\vskip 5pt
{\sc Table 1}
\end{center}

In \cite{sr}, it has been shown that the number of divisors function, noted $\tau(\cdot)$, satisfies the inequality
$$
\tau(n) \le \biggl(\frac{\log (n\gamma(n))}{\omega(n)}\biggr)^{\omega(n)}\beta(n) \quad (n \ge 2)
$$
where
$$
\beta(n):=\prod_{p \mid n}\frac{1}{\log p}.
$$
This inequality has been extensively worked out in the author's Ph. D. Thesis \cite{jmdk:pl}. It is worth mentioning that the function $\beta(n)$ is intimately linked to the value of $\tau(n)$ in more than one way. In particular, it follows from Theorem $5.3$ p.491 of \cite{gt} that
$$
\frac{(\log z)^{\omega(n)}}{\omega(n)!}\beta(n) \le |\{1 \le j \le z:\ \gamma(j) \mid \gamma(n)\}| \le \frac{(\log (z\gamma(n)))^{\omega(n)}}{\omega(n)!}\beta(n)
$$
so that
$$
\tau(n,z):=\sum_{\substack{d \mid n \\ d \le z}}1 \le \frac{(\log (z\gamma(n)))^{\omega(n)}}{\omega(n)!}\beta(n).
$$
In the special case where $n$ is squarefree, one prefers the estimate
$$
\tau(n,z) \le \sum_{0 \le j \le D(n,z)}\binom{\omega(n)}{j}
$$
where
$$
D(n,z) := \max_{\substack{d \mid n \\ d \le z}}\omega(d).
$$
For comparison, the argument in \cite{pe} allows one to establish that
$$
-\max_{\substack{0 \le j \le D(n,z) \\ 2 \nmid j}}\binom{\omega(n)-1}{j} \le M(n,z) \le \max_{\substack{0 \le j \le D(n,z) \\ 2 \mid j}}\binom{\omega(n)-1}{j}
$$
for every integer $n \ge 2$.

Let $s \ge 1$ be a fixed integer. For any integers $n \ge 1$ we write $d(n):=\{d: d \mid n\}$. We define the $s$-th multiplicative energy of $n$ to be
$$
E_s(n):=|\{(d_1,\dots,d_s,d_{s+1},\dots,d_{2s}) \in d(n)^{2s}:\ d_1 \cdots d_s = d_{s+1} \cdots d_{2s}\}|.
$$
In particular, we trivially have $E_s(n) \le \tau(n)^{2s-1}$. In what follows, $A(i,j)$ are Eulerian numbers of the first kind that can be computed by using the formula
$$
A(i,j) = \sum_{v = 0}^{j}\binom{i+1}{v}(-1)^v(j+1-v)^i \quad (0 \le j \le i-1,\ i,j \in \mathbb{Z}).
$$

\begin{theorem}\label{thm:3}
Let $s,n \ge 2$ be positive integers. Then the inequality
\begin{equation}\label{thm:3:result}
\tau(n)^{2s-1}\left(\frac{A(2s-1,s-1)}{(2s-1)!}\right)^{\omega(n)} < E_s(n) \le \tau(n)^{2s-1}\left(\frac{1}{2^{2s-1}}\binom{2s}{s}\right)^{\omega(n)}
\end{equation}
holds.
\end{theorem}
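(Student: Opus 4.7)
The plan is to reduce \eqref{thm:3:result} to a local statement at each prime power via multiplicativity and then analyze the local factor. For coprime $m,n$ every divisor of $mn$ factors uniquely as a divisor of $m$ times a divisor of $n$, and the multiplicative condition $d_1\cdots d_s=d_{s+1}\cdots d_{2s}$ separates into independent conditions on each part. This gives $E_s(mn)=E_s(m)E_s(n)$; since $\tau(n)^{2s-1}$ is also multiplicative, both sides of \eqref{thm:3:result} distribute across the prime-power factors of $n$, so it suffices to prove
\[
\frac{A(2s-1,s-1)}{(2s-1)!}(a+1)^{2s-1}\;<\;E_s(p^a)\;\le\;\frac{1}{2^{2s-1}}\binom{2s}{s}(a+1)^{2s-1}
\]
for every prime power $p^a$ with $a\ge 1$.

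For such a prime power, the condition reads $b_1+\cdots+b_s=b_{s+1}+\cdots+b_{2s}$ with $b_i\in\{0,\ldots,a\}$; substituting $c_i=b_i$ for $i\le s$ and $c_i=a-b_i$ for $i>s$ turns it into $\sum_{i=1}^{2s}c_i=sa$, so $E_s(p^a)=[x^{sa}](1+x+\cdots+x^a)^{2s}$. Writing $(1+x+\cdots+x^a)^{2s}=(1-x^{a+1})^{2s}(1-x)^{-2s}$ and extracting the coefficient yields the closed form
\[
E_s(p^a)=\sum_{j=0}^{s-1}(-1)^j\binom{2s}{j}\binom{(s-j)a+2s-1-j}{2s-1},
\]
a polynomial in $a$ of degree $2s-1$. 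Its leading coefficient is $\frac{1}{(2s-1)!}\sum_{j=0}^{s-1}(-1)^j\binom{2s}{j}(s-j)^{2s-1}=A(2s-1,s-1)/(2s-1)!$ by the Eulerian-number formula recalled in the introduction. So if we set $R(a):=E_s(p^a)/(a+1)^{2s-1}$, then $R(a)\to A(2s-1,s-1)/(2s-1)!$ as $a\to\infty$. At $a=1$, the Vandermonde identity $\sum_k\binom{s}{k}^2=\binom{2s}{s}$ immediately gives $E_s(p)=\binom{2s}{s}$, so $R(1)=\binom{2s}{s}/2^{2s-1}$, meeting the upper bound with equality.

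Both local bounds then follow from a single monotonicity claim: \emph{$R(a)$ is strictly decreasing on the integers $a\ge 1$.} Given this, $R(a)\le R(1)$ is the upper bound and $R(a)>\lim_{a'\to\infty}R(a')$ is the strict lower bound. The sanity check $s=2$ is instructive: direct computation collapses the closed form to $R(a)=\tfrac{2}{3}+\tfrac{1}{3(a+1)^2}$, manifestly strictly decreasing with limit $\tfrac{2}{3}=A(3,1)/3!$ and value $\tfrac{3}{4}=\binom{4}{2}/2^3$ at $a=1$. Two routes I would try for general $s$ are (i) manipulating the closed form above directly, to reduce $(a+2)^{2s-1}E_s(p^a)-(a+1)^{2s-1}E_s(p^{a+1})\ge 0$ to a polynomial inequality that can be grouped into non-negative pieces using Eulerian-number identities, or (ii) using the Fourier representation
\[
E_s(p^a)=\frac{1}{2\pi}\int_0^{2\pi}\left|\frac{\sin((a+1)\theta/2)}{\sin(\theta/2)}\right|^{2s}d\theta
\]
arising from Parseval's formula applied to the Fejér kernel, and showing that the normalized integral $(a+1)^{-(2s-1)}\int_0^{2\pi}|\cdot|^{2s}\,d\theta$ decreases in $a$.

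The core obstacle is precisely this monotonicity. Expanding $R(a)-A(2s-1,s-1)/(2s-1)!$ as a finite sum of negative powers of $a+1$ (as happens transparently in the $s=2$ case) and verifying that each coefficient is non-negative uniformly in $s\ge 2$ is the delicate part; the cleanest control seems to come either from a coefficient identity tying the closed-form expansion back to Eulerian numbers, or from rearranging the Dirichlet-kernel integral into a sum of manifestly non-negative contributions.
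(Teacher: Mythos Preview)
Your reduction to prime powers, the closed form for $E_s(p^a)$, the identification of the endpoints $R(1)=\binom{2s}{s}/2^{2s-1}$ and $\lim_{a\to\infty}R(a)=A(2s-1,s-1)/(2s-1)!$, and the recognition that everything hinges on the strict monotonicity of $R(a)=E_s(p^a)/(a+1)^{2s-1}$ are all correct and match the paper's strategy exactly. But the proposal stops at precisely the point where the real work begins: you explicitly label the monotonicity as ``the core obstacle'' and only sketch two possible avenues without carrying either one out. As it stands, this is a proof outline with the crucial lemma left open.

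The paper fills this gap by a route you gesture at in your last paragraph but do not execute. Writing $x=a+1$ and $P_s(x)=E_s(p^{x-1})$, one uses the closed form (your sum, reindexed via $v=s-j$) to rewrite
\[
P_s(x)=\frac{x}{(2s-1)!}\sum_{v=1}^{s}(-1)^{s-v}\binom{2s}{s-v}\,v\prod_{j=1}^{s-1}(v^2x^2-j^2),
\]
which makes it visible that $P_s$ is an \emph{odd} polynomial of degree $2s-1$. The monotonicity of $P_s(x)/x^{2s-1}$ is then equivalent to showing that every coefficient of $P_s$ is strictly positive. Expanding $\prod_j(v^2x^2-j^2)$ and collecting, this reduces to determining the sign of
\[
c_r=\sum_{v=1}^{s}(-1)^{v-1}\binom{2s}{s-v}v^{2r+1}\qquad(0\le r\le s-1),
\]
and the paper shows $\operatorname{sgn}(c_r)=(-1)^r$ by interpreting $c_r$ through a Lagrange interpolation polynomial of degree $2s$ whose leading coefficient can be read off via a generalized Vandermonde determinant (Mitchell's theorem). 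Neither of your proposed routes---grouping the difference $(a+2)^{2s-1}E_s(p^a)-(a+1)^{2s-1}E_s(p^{a+1})$, or the Fej\'er-kernel integral---comes with an argument, and the first in particular does not obviously simplify. The missing idea is the oddness of $P_s(x)$ together with a concrete mechanism (here, interpolation plus Vandermonde positivity) to pin down the signs of its coefficients.
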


\begin{remark}
It is possible to establish that
$$
\frac{A(2s-1,s-1)}{(2s-1)!} \sim \sqrt{\frac{3}{\pi s}} \quad \mbox{and} \quad \frac{1}{2^{2s-1}}\binom{2s}{s} \sim \sqrt{\frac{4}{\pi s}} \quad (s \rightarrow \infty).
$$
The first relation is deduced from the identity
\begin{equation}\label{int:id}
\int_{-\infty}^{\infty}\biggl(\frac{\sin(x)}{x}\biggr)^{2s}dx=\pi\frac{A(2s-1,s-1)}{(2s-1)!} \quad (s \in \mathbb{N}).
\end{equation}
\end{remark}

The upper bound in \eqref{thm:3:result} is in fact an equality in the case where $n$ is squarefree. In this direction, we will see that the proof gives a much more general result.

Throughout the paper, we denote the $k$-th prime number by $p_k$. Also, for each $k \ge 0$, we denote by $n_k$ the number $\prod_{j=1}^{k}p_j$ (so that $n_0=1$).

{\bf Acknowledgment.}  I thank Jean-Marie De Koninck for his interest in this article and Thomas J. Ransford for a discussion about the integrals \eqref{int:id} many years ago.

\section{Preliminary lemmas}

\begin{lemma}\label{surj}
Let $i \ge 0$ and $j \ge 1$ be integers. Let also denote the number of surjections from a set of $i$ elements to a set of $j$ elements by $S_{i,j}$. It satisfies
$$
S_{i,j}=\sum_{v=0}^{j}(-1)^{j-v}\binom{j}{v}v^i.
$$
\end{lemma}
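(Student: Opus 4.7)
The plan is to apply inclusion-exclusion on the codomain. Fix sets $X$ with $|X|=i$ and $Y=\{y_1,\dots,y_j\}$, and for each $k\in\{1,\dots,j\}$ let $A_k$ denote the collection of functions $f:X\to Y$ whose image avoids $y_k$. A function is surjective exactly when it belongs to none of the $A_k$, so
$$S_{i,j}=j^i-\Bigl|\bigcup_{k=1}^{j}A_k\Bigr|.$$

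Next I would compute the sizes of the intersections: for any $T\subseteq\{1,\dots,j\}$, the set $\bigcap_{k\in T}A_k$ consists exactly of the functions $X\to Y\setminus\{y_k:k\in T\}$ and so has cardinality $(j-|T|)^i$. (When $i=0$ and $|T|=j$ one adopts the usual convention $0^0=1$, since the unique empty function is vacuously a ``function into the empty set''.) Grouping subsets by their size and invoking the inclusion-exclusion principle gives
$$S_{i,j}=\sum_{m=0}^{j}(-1)^{m}\binom{j}{m}(j-m)^i,$$
and the reindexing $v=j-m$ transforms the right-hand side into $\sum_{v=0}^{j}(-1)^{j-v}\binom{j}{v}v^i$, as claimed.

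There is essentially no obstacle; this is textbook inclusion-exclusion. The only point worth checking separately is the degenerate boundary $i=0$: there $S_{0,j}=0$ for $j\ge 1$ since no surjection from an empty set to a nonempty set exists, and the right-hand side collapses to $\sum_{v=0}^{j}(-1)^{j-v}\binom{j}{v}=(1-1)^j=0$ under the convention $0^0=1$. Thus the formula is valid on the entire stated range of parameters.
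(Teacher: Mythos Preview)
Your proof is correct; this is exactly the standard inclusion--exclusion argument. The paper itself does not actually supply a proof, merely stating that the formula is ``well known'' and then recording as a consequence that $\sum_{v=0}^{j}(-1)^{j-v}\binom{j}{v}v^i$ equals $0$ for $i=0,\dots,j-1$ and $i!$ for $i=j$ (since $S_{i,j}=0$ when $i<j$ and $S_{j,j}=j!$). Your write-up therefore fills in what the paper omits.
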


\begin{proof}
It is a well known result. We remark that it implies that
$$
\sum_{v=0}^{j}(-1)^{j-v}\binom{j}{v}v^i=\left\{\begin{array}{ll} 0 & \mbox{for}\ i=0,\dots,j-1, \\ i! & \mbox{for}\ i=j. \end{array}\right.
$$
\end{proof}

\begin{lemma}\label{Mitchell}
Let $0 \le u_1 < \cdots < u_{\ell}$ and $0 < x_1 < \cdots < x_{\ell}$ be two sequences of real numbers. Then the generalized Vandermonde determinant satisfies
$$
\begin{vmatrix} x_1^{u_1} & x_1^{u_2} & \cdots & x_1^{u_{\ell}} \\ x_2^{u_1} & x_2^{u_2} & \cdots & x_2^{u_{\ell}} \\ \vdots & \vdots & \cdots & \vdots \\ x_{\ell}^{u_1} & x_{\ell}^{u_2} & \cdots & x_{\ell}^{u_{\ell}} \end{vmatrix} > 0.
$$
\end{lemma}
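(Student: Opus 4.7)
The plan is to prove the statement by induction on $\ell$, using the classical Rolle-type zero-counting argument for ``generalized polynomials'' $\sum c_j x^{v_j}$ with real exponents on $(0,\infty)$.

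The base case $\ell=1$ is just $x_1^{u_1}>0$. For the inductive step, I would fix $x_1,\dots,x_{\ell-1}$, treat $x_\ell$ as a variable $x$, and set
$$
D(x) := \begin{vmatrix} x_1^{u_1} & \cdots & x_1^{u_\ell} \\ \vdots & & \vdots \\ x_{\ell-1}^{u_1} & \cdots & x_{\ell-1}^{u_\ell} \\ x^{u_1} & \cdots & x^{u_\ell} \end{vmatrix}.
$$
Expanding along the last row gives $D(x)=\sum_{j=1}^{\ell}(-1)^{\ell+j}M_j\, x^{u_j}$, where $M_j$ is the minor obtained by removing the last row and $j$-th column. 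In particular the coefficient of the top power $x^{u_\ell}$ is $M_\ell$, which is itself a generalized Vandermonde determinant of size $\ell-1$ on the exponents $u_1<\cdots<u_{\ell-1}$ and the points $x_1<\cdots<x_{\ell-1}$, and is therefore strictly positive by the inductive hypothesis. Also, $D$ vanishes at each $x=x_i$ for $i<\ell$ because then two rows of the determinant coincide.

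The key auxiliary fact I would now establish, by a separate short induction, is that any nonzero function $f(x)=\sum_{j=1}^{\ell}c_j x^{v_j}$ with $v_1<\cdots<v_\ell$ has at most $\ell-1$ zeros in $(0,\infty)$. The trick is to factor out $x^{v_1}$, differentiate $x^{-v_1}f(x)$ to produce a generalized polynomial with only $\ell-1$ terms (the constant term dies), apply the inductive hypothesis to bound its positive zeros by $\ell-2$, and invoke Rolle's theorem. Applied to $D(x)$, this tells us that $x_1,\dots,x_{\ell-1}$ account for \emph{all} of its positive zeros.

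Since $D$ has no other zero in $(x_{\ell-1},\infty)$ and $D(x)\sim M_\ell\, x^{u_\ell}\to+\infty$ as $x\to\infty$ (because $u_\ell$ is the strictly largest exponent and $M_\ell>0$), by continuity $D(x)>0$ throughout $(x_{\ell-1},\infty)$. Taking $x=x_\ell>x_{\ell-1}$ yields the conclusion. The main subtlety, and the only place real analysis enters, is the Rolle-based zero count for generalized polynomials with non-integer exponents; once that lemma is in hand, the rest is bookkeeping on signs and the behavior at infinity.
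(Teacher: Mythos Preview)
Your argument is correct: the induction on $\ell$, the cofactor expansion along the last row, the zero-count for generalized polynomials via the ``divide by $x^{v_1}$ and differentiate'' Rolle trick, and the sign analysis at infinity all go through as stated. One small point worth making explicit is that $D$ is not identically zero (you implicitly use this when invoking the zero-count lemma), but this is immediate since its leading coefficient $M_\ell$ is strictly positive by the inductive hypothesis.

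As for comparison with the paper: the paper does not give a self-contained argument at all. It simply records that the result is due to Mitchell and points to Lemma~A2 of Huxley for a modern treatment. Your proof is precisely the standard modern route to this result (the theory of Chebyshev systems / generalized Descartes rule), and is very likely the content behind the Huxley reference. So in spirit you are supplying what the paper only cites; there is no genuine methodological difference, just a difference in level of detail.
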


\begin{proof}
This is known as a result of Mitchell \cite{ohm}. A modern proof uses the Lemma $A2$ of \cite{mnh}.
\end{proof}

For $x \in \mathbb{R}$, we define the sign function by
$$
\sgn{x}:=\left\{\begin{array}{ll}-1 & \mbox{if}\ x < 0, \\ 0 & \mbox{if}\ x = 0, \\ 1 & \mbox{otherwise}. \end{array}\right.
$$

\begin{lemma}\label{sign}
Let $\lambda \ge 1$ and $m \ge 0$ be integers. Let also $F_{\lambda,m}(x)$ be the polynomial of minimal degree that satisfies
$$
F_{\lambda,m}(j):=\sgn{j}j^m \quad (j \in \{-\lambda,\dots,\lambda\}).
$$
We assume that $0 \le m \le 2\lambda-1$. We have
$$
\left\{\begin{array}{l}
\mbox{$F_{\lambda,m}(x)$ is an odd function of degree $2\lambda-1$ with a leading term}\\
\mbox{of sign $(-1)^{\frac{2\lambda-m-2}{2}}$ if $m$ is even,}\\
\mbox{$F_{\lambda,m}(x)$ in an even function of degree $2\lambda$ with a leading term}\\
\mbox{of sign $(-1)^{\frac{2\lambda-m-1}{2}}$ if $m$ is odd}.
\end{array}\right.
$$
\end{lemma}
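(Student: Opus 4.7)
The plan is to exploit the (anti)symmetry of the data and reduce the interpolation problem to a square linear system whose sign can be analyzed using the generalized Vandermonde determinant of Lemma~\ref{Mitchell}.

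First I would settle the parity of $F_{\lambda,m}$. A direct computation shows $\sgn{-j}(-j)^m=(-1)^{m+1}\sgn{j}j^m$, so the data $j \mapsto \sgn{j}j^m$ is even in $j$ when $m$ is odd and odd in $j$ when $m$ is even. Since the interpolating polynomial of degree at most $2\lambda$ through $2\lambda+1$ prescribed values is unique, $(-1)^{m+1}F_{\lambda,m}(-x)$ must coincide with $F_{\lambda,m}(x)$; hence $F_{\lambda,m}$ consists only of even-degree monomials when $m$ is odd and only of odd-degree monomials when $m$ is even.

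Next, I would use this parity to replace the full $(2\lambda+1)\times(2\lambda+1)$ Vandermonde system by a $\lambda\times\lambda$ one. The conditions at negative $j$ are redundant, and the condition at $j=0$ either forces $c_0=0$ (when $F_{\lambda,m}$ is even, i.e., $m$ odd) or is automatic (when $F_{\lambda,m}$ is odd, i.e., $m$ even). One is then left with the system $\sum_{k=1}^{\lambda}c_{u_k}j^{u_k}=j^m$ for $j=1,\dots,\lambda$, where the exponents are $(u_1,\dots,u_\lambda)=(2,4,\dots,2\lambda)$ if $m$ is odd and $(1,3,\dots,2\lambda-1)$ if $m$ is even. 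Cramer's rule then expresses the highest coefficient ($c_{2\lambda}$ or $c_{2\lambda-1}$ respectively) as a ratio of two determinants of the shape $\det(j^{v_k})_{1\le j,k\le \lambda}$ with strictly increasing nonnegative integer exponents. By Lemma~\ref{Mitchell}, each such determinant is strictly positive once the columns are arranged so that the exponents are increasing.

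The denominator already has its exponents in sorted order, while the numerator differs only in that $u_\lambda$ has been replaced by $m$. Writing $m=2r+1$ (odd case) or $m=2r$ (even case) with $r\in\{0,\dots,\lambda-1\}$, the unsorted exponent sequence in the numerator differs from the sorted one by the cyclic movement of the entry $m$ from position $\lambda$ to position $r+1$, a permutation of parity $(-1)^{\lambda-r-1}$. Substituting $r=(m-1)/2$ or $r=m/2$ yields exactly the claimed sign: $(-1)^{(2\lambda-m-1)/2}$ when $m$ is odd and $(-1)^{(2\lambda-m-2)/2}$ when $m$ is even. Since this sign is $\pm 1$, the corresponding coefficient is nonzero, which also pins down the exact degrees $2\lambda$ and $2\lambda-1$. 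The main technical care lies in Step 3: tracking the cyclic-shift parity, especially at the boundary $r=0$ (i.e., $m\in\{0,1\}$) and at $r=\lambda-1$, to confirm that the unified parity formula $(-1)^{\lambda-r-1}$ holds throughout.
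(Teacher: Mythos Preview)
Your proof is correct and follows essentially the same approach as the paper: reduce by parity to a $\lambda\times\lambda$ linear system, apply Cramer's rule, and read off the sign of the leading coefficient from Lemma~\ref{Mitchell} after sorting the exponent column. The only cosmetic difference is that the paper treats just the odd-$m$ case this way and then disposes of even $m$ via the one-line observation $F_{\lambda,m}(x)=F_{\lambda,m+1}(x)/x$, whereas you run the same Cramer/Mitchell argument uniformly for both parities.
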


\begin{proof}
We first assume that $m \ge 1$ is odd. From Lagrange interpolation with $2\lambda+1$ points, we have $\deg F_{\lambda,m}(x) \le 2\lambda$. Now, the polynomial
$$
G_{\lambda,m}(x):=F_{\lambda,m}(x)-F_{\lambda,m}(-x)
$$
has at least $2\lambda+1$ roots, so that $G_{\lambda,m}(x)$ is identically $0$ and we deduce that $F_{\lambda,m}(x)$ is an even function. Therefore, we search for a polynomial of the type
$$
F_{\lambda,m}(x):=\sum_{j=1}^{\lambda}a_{2j}x^{2j}.
$$
We get to the linear system
$$
\begin{pmatrix} 1^{2} & 1^{4} & \cdots & 1^{2\lambda} \\ 2^{2} & 2^{4} & \cdots & 2^{2\lambda} \\ \vdots & \vdots & \cdots & \vdots \\ \lambda^{2} & \lambda^{4} & \cdots & \lambda^{2\lambda} \end{pmatrix}\begin{pmatrix} a_2 \\ a_4 \\ \vdots \\ a_{2\lambda} \end{pmatrix} = \begin{pmatrix} 1^m \\ 2^m \\ \vdots \\ \lambda^m \end{pmatrix}.
$$
By Cramer's rule,
$$
\begin{vmatrix} 1^{2} & 1^{4} & \cdots & 1^{2\lambda-2} & 1^{m} \\ 2^{2} & 2^{4} & \cdots & 2^{2\lambda-2} & 2^{m} \\ \vdots & \vdots & \cdots & \vdots & \vdots \\ \lambda^{2} & \lambda^{4} & \cdots & \lambda^{2\lambda-2} & \lambda^{m} \end{vmatrix}=a_{2\lambda}\begin{vmatrix} 1^{2} & 1^{4} & \cdots & 1^{2\lambda} \\ 2^{2} & 2^{4} & \cdots & 2^{2\lambda} \\ \vdots & \vdots & \cdots & \vdots \\ \lambda^{2} & \lambda^{4} & \cdots & \lambda^{2\lambda} \end{vmatrix}
$$
so that we deduce from Lemma \ref{Mitchell} that $\sgn{a_{2\lambda}}=(-1)^{\frac{2\lambda-m-1}{2}}$. The result follows from the fact that there is a unique such interpolating polynomial of degree at most  $2\lambda$.

In the case where $m \ge 0$ is even, we simply observe that $F_{\lambda,m}(x):=\frac{F_{\lambda,m+1}(x)}{x}$. The proof is complete.
\end{proof}

Let's define
$$
\eta(n,t):=\prod_{p \mid n}\biggl(1+\frac{1}{p^{\frac{1}{t}}}\biggr).
$$

\begin{lemma}\label{comp}
Let $t \ge 2$ and $k \ge 1$ be positive integers. Then
\begin{equation}\label{c-easy}
\eta(n_k,t) < \left\{\begin{array}{ll} \exp(k^{1-1/t}) & \mbox{if}\ t = 2\ \mbox{and}\ k \le 55,\\ \exp\bigl(k^{1-1/t}-\frac{\log t}{t}\bigr) & \mbox{otherwise}.\end{array}\right.
\end{equation}
Also,
\begin{equation}\label{c-hard}
\eta(n_k,t) \le \exp\biggl(C\frac{k^{1-1/t}}{(1-1/t)\log^{1/t}_+ k}-(1-\delta_{2,t})\frac{\log t}{t}\biggr)
\end{equation}
with $C=1.07073472\dots$. The constant $C$ is best possible and is attained only at $t=2$ and $k=2149$.
\end{lemma}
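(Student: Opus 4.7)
The plan is to work with $\log \eta(n_k,t)=\sum_{j=1}^{k}\log(1+p_j^{-1/t})$ and compare it against the two claimed bounds. From $\log(1+x)\le x$, or more carefully $\log(1+x)=x-x^2/2+O(x^3)$ for small $x$, the problem reduces to estimating the power sums $\sum_{j\le k}p_j^{-\ell/t}$ for $\ell=1,2$. I would apply Abel summation,
$$\sum_{j=1}^{k} p_j^{-1/t} \;=\; \frac{\pi(p_k)}{p_k^{1/t}}+\frac{1}{t}\int_{2}^{p_k}\frac{\pi(u)}{u^{1+1/t}}\,du,$$
and then insert explicit Rosser--Schoenfeld bounds on $\pi(x)$ and $p_k$. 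After an integration by parts this yields, for every $k$ beyond an explicit threshold,
$$\sum_{j=1}^{k} p_j^{-1/t}\;\le\;\frac{1}{1-1/t}\,\frac{k^{1-1/t}}{(\log k)^{1/t}}\bigl(1+\varepsilon(k,t)\bigr),$$
with $\varepsilon(k,t)$ an explicit function, decreasing in $k$ and bounded uniformly in $t\ge 2$. This already reproduces the shape of the right-hand side of \eqref{c-hard}.

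For \eqref{c-easy}, the above bound is quite generous once $k$ is moderately large: the factor $(1-1/t)^{-1}(\log k)^{-1/t}$ is $<1$, so $\sum_{j=1}^{k} p_j^{-1/t}\le k^{1-1/t}-\log t/t$ follows with slack to spare. For small $k$ I would verify the inequality directly by tabulating $\eta(n_k,t)$. The excluded region $t=2$, $k\le 55$ is precisely where the available slack fails to absorb $\log 2/2\approx 0.347$, so only the weaker bound $\exp(k^{1-1/t})$ can be asserted there.

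For \eqref{c-hard}, I would set
$$R(k,t):=(1-1/t)\Bigl(\log\eta(n_k,t)+(1-\delta_{2,t})\tfrac{\log t}{t}\Bigr)\frac{\log^{1/t}_+k}{k^{1-1/t}},$$
so the inequality reads $R(k,t)\le C$. The asymptotic analysis above gives $R(k,t)\to 1$ as $k\to\infty$ uniformly in $t\ge 2$, while for $k$ fixed and $t\to\infty$ one has $\log\eta(n_k,t)\to k\log 2$ and therefore $R(k,t)\to\log 2<1$. These two boundary behaviours together confine the maximum of $R$ to a finite rectangle in $(k,t)$, and a direct machine computation over this rectangle then identifies the unique maximum at $(t,k)=(2,2149)$ with value $1.07073472\dots$.

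The main obstacle will be quantitative: producing an explicit $K_0$ and $T_0$ past which the asymptotic upper bound \emph{provably} lies below $C$, and then performing a computer-assisted sweep over $k\le K_0$, $2\le t\le T_0$ with enough precision to single out $(2,2149)$ as the unique maximiser rather than a neighbour. The Kronecker delta $\delta_{2,t}$ in \eqref{c-hard} is the signature of this calibration: subtracting $\log t/t$ everywhere except at $t=2$ is what causes the extremum to fall precisely at $t=2$, and any other choice would shift the maximiser and inflate $C$ above $1.07073472\dots$.
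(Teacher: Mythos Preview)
Your approach is viable but genuinely different from the paper's, and considerably heavier. The paper does not use Abel summation or explicit Rosser--Schoenfeld bounds on $\pi(x)$ at all; instead it proceeds by induction on $k$ for each fixed $t\ge 2$. The inductive step for \eqref{c-easy} is simply
\[
(k-1)^{1-1/t}+\log\bigl(1+p_k^{-1/t}\bigr)<k^{1-1/t},
\]
which by the mean value theorem reduces to $p_k^{-1/t}<(1-1/t)k^{-1/t}$, i.e.\ $(1-1/t)^t>1/\log k$; this holds for $k\ge 57$ via Rosser's elementary bound $p_k>k\log k$ alone. The base cases $k\le 56$ are checked numerically for $2\le t\le 99$, and for $t\ge 100$ the crude estimate $\log\eta(n_k,t)\le k\log 2$ dispatches them. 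The argument for \eqref{c-hard} is entirely parallel, with $f_t(x)=x^{1-1/t}/(\log x)^{1/t}$ in place of $x^{1-1/t}$; concavity of $f_t$ plus the same mean value inequality reduces the induction step to $\log k>C/((t-1)(C-1))$, and the base cases (up to $k=3750230$ when $t=2$) are again tabulated.

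What each approach buys: the paper's induction isolates a single elementary inequality per step and uses only $p_k>k\log k$, making the proof short and the thresholds sharp. Your Abel summation route would recover the same asymptotic shape, but the quantitative work you flag as ``the main obstacle'' --- proving uniformity in $t$ of the error term $\varepsilon(k,t)$ so as to confine the maximum of $R(k,t)$ to a finite rectangle --- is precisely what the paper sidesteps by comparing consecutive $k$ rather than summing globally. Your argument that the two boundary limits $R\to 1$ and $R\to\log 2$ confine the maximum is also not quite complete as stated: pointwise limits along the two axes do not by themselves exclude a ridge escaping to infinity along some curve $t=t(k)$, so you would need the uniformity claim to do real work there.
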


\begin{proof}
We will begin with the proof of \eqref{c-easy}. We will prove this result by induction for every single values of $t \ge 2$. For $t=2,\dots,99$ we verify with a computer for every value of $k=1,\dots,56$. For $t \ge 100$ there is no need to verify since
\begin{equation}\label{ln2}
\frac{\log t}{t}+\sum_{j=1}^{k}\log\biggl(1+\frac{1}{p_j^{1/t}}\biggr) \le \frac{\log 100}{100} + k\log 2 < 0.74k
\end{equation}
while $k^{1-1/t} \ge 0.74k$ for each $k=1,\dots,56$. We consider $t \ge 2$ as fixed. Now, for a fixed $k \ge 57$, we assume that the result holds for $k-1$. We will establish that
\begin{equation}\label{ind-1}
(k-1)^{1-1/t}+\log\biggl(1+\frac{1}{p_k^{1/t}}\biggr) < k^{1-1/t}
\end{equation}
which is clearly enough for the induction step with this value of $t$. We see that \eqref{ind-1} holds if
\begin{eqnarray}\nonumber
\frac{1}{p_k^{1/t}} & < & k^{1-1/t}-(k-1)^{1-1/t} \\ \nonumber
& \Uparrow &\\ \label{ind-2}
\frac{1}{p_k^{1/t}} & < & \frac{1-1/t}{k^{1/t}}
\end{eqnarray}
from the mean value theorem. Now, it is known that $p_k > k\log k$ for each $k\ge 1$, see \cite{jbr}. Using this inequality, we have that \eqref{ind-2} holds if
\begin{eqnarray*}
\frac{1}{(k\log k)^{1/t}}  <  \frac{1-1/t}{k^{1/t}} & \Leftrightarrow & \frac{1}{\log k} < \left(1-\frac{1}{t}\right)^t \\
& \Leftarrow & \frac{1}{\log k} < \frac{1}{4} \\
& \Leftarrow & k \ge 57.
\end{eqnarray*}
We have used the fact that the function $\left(1-\frac{1}{x}\right)^x $ is strictly increasing for $x > 1$. This concludes the proof of inequality \eqref{ind-1} and thus the induction step for the fixed value of $t$. Inequality \eqref{c-easy} is established.

We now turn to the proof of \eqref{c-hard}. The argument is very similar, that is we proceed by induction for every single value of $t \ge 2$. For $t=2,\dots,99$ we verify with a computer for each value of $k$ from 1 to what is written in Table 2.
\begin{center}
\begin{tabular}{|c||c|c|c|c|c|c|c|c|}\hline
$t$ & 2 & 3 & 4 & 5 & 6 & 7 & 8\ \mbox{to}\ 99\\ \hline
$k$ & 3750230 & 1936 & 155 & 44 & 20 & 12 & 8\\ \hline
\end{tabular}
\vskip 5pt
{\sc Table 2}
\end{center}
For each $t \ge 100$, by using \eqref{ln2}, it is enough to have
$$
0.74k < \frac{C}{1-1/t}\frac{k^{1-1/t}}{\log^{1/t}_+ k} \quad \Leftrightarrow \quad (k\log_+ k)^{1/t} < \frac{C}{(1-1/t) \cdot 0.74}
$$
which is easily seen to hold for $k=1,\dots,8$.

Let's consider $t \ge 2$ as fixed. We assume that
\begin{equation}\label{c-hard-ind}
\sum_{j=1}^{J}\log\biggl(1+\frac{1}{p_j^{1/t}}\biggr) < C\frac{J^{1-1/t}}{(1-1/t)\log^{1/t}J}-(1-\delta_{2,t})\frac{\log t}{t}
\end{equation}
holds at $J=k-1$ and we want to show that it holds with $J=k(\ge 9)$ . It is enough to show that
\begin{eqnarray}\nonumber
\frac{1}{p_k^{1/t}} & < & C\frac{k^{1-1/t}}{(1-1/t)\log^{1/t} k}-C\frac{(k-1)^{1-1/t}}{(1-1/t)\log^{1/t}(k-1)} \\ \nonumber
& \Uparrow & \\ \label{ind-3}
\frac{1}{p_k^{1/t}} & < & \frac{C}{1-1/t}\biggl(\frac{1-1/t}{(k\log k)^{1/t}}-\frac{1/t}{k^{1/t}\log^{1+1/t} k}\biggr)
\end{eqnarray}
from the mean value theorem and the fact that $f_t(x):=\frac{x^{1-1/t}}{\log^{1/t} x}$ satisfies $f''_t(x) < 0$ if $x \ge 6 > \exp\Bigl(\frac{2/t-1+\sqrt{5-4/t}}{2(1-1/t)}\Bigr)$ for each $t \ge 2$. Again, by using $p_k > k\log k$ for each $k \ge 1$, we deduce that \eqref{ind-3} holds if
\begin{eqnarray*}
\frac{1}{(k\log k)^{1/t}} & < & \frac{C}{1-1/t}\biggl(\frac{1-1/t}{(k\log k)^{1/t}}-\frac{1/t}{k^{1/t}\log^{1+1/t} k}\biggr) \\
& \Updownarrow & \\
1 & < & C\biggl(1-\frac{1}{(t-1)\log k}\biggr) \\
& \Updownarrow & \\
\log k & > & \frac{C}{(t-1)(C-1)}
\end{eqnarray*}
which holds for $k$ greater that the corresponding value in Table 2 if $t=2,\dots,99$ or for $k \ge 9$ if $t \ge 100$. This completes the inductive step for the fixed value of $t \ge 2$ and the proof is complete.
\end{proof}

\section{Proof of Theorem \ref{thm:1} and \ref{thm:2}}

We write
\begin{eqnarray*}
L_t(n) & = & \int_{1}^{n}\sum_{\substack{ d_1, \dots, d_t \mid n \\ d_1, \dots, d_t \le z}}\mu(d_1) \cdots \mu(d_t)dz \\
& = & \sum_{d_1, \dots, d_t \mid n}\mu(d_1) \cdots \mu(d_t)\int_{1}^{n}\chi(d_1,z)\cdots\chi(d_t,z)dz \\
& = & \sum_{d_1, \dots, d_t \mid n}\mu(d_1) \cdots \mu(d_t)(n-\max(d_1,\dots,d_t))
\end{eqnarray*}
where
$$
\chi(d,z):=\left\{\begin{array}{ll} 0 & \mbox{if}\ d < z, \\ 1 & \mbox{otherwise}, \end{array}\right. \quad(d \in \mathbb{N}, z \in \mathbb{R}).
$$
Now, for $n \ge 2$, we rearrange the terms according to the number $j=1,\dots,t$ of $d_i$ that are maximal at the same time and we use the fact that $M(n,n)=0$ to get to

\begin{eqnarray}\nonumber
L_t(n) & = & -\sum_{d_1, \dots, d_t \mid n}\mu(d_1) \cdots \mu(d_t)\max(d_1,\dots,d_t)\\ \nonumber
& = & -\sum_{j=1}^{t}\binom{t}{j}\sum_{d \mid n}\mu(d)^j d\sum_{\substack{d_1, \dots, d_{t-j} \mid n \\ d_1, \dots, d_{t-j} < d}}\mu(d_1) \cdots \mu(d_{t-j})\\ \nonumber
& = & -\sum_{j=1}^{t}\binom{t}{j}\sum_{d \mid n}\mu(d)^j d\bigg(\sum_{\substack{e \mid n \\ e < d}}\mu(e)\bigg)^{t-j}\\ \nonumber
& = & -\sum_{d \mid n}d\bigg(\bigg(\sum_{\substack{e \mid n \\ e \le d}}\mu(e)\bigg)^{t}-\bigg(\sum_{\substack{e \mid n \\ e < d}}\mu(e)\bigg)^{t}\bigg)\\ \label{relimp}
& = & (-1)^{t}n\sum_{d \mid n}\frac{1}{d}\bigg(\bigg(\sum_{\substack{e \mid n \\ e \ge n/d}}\mu(e)\bigg)^{t}-\bigg(\sum_{\substack{e \mid n \\ e > n/d}}\mu(e)\bigg)^{t}\bigg).
\end{eqnarray}
Thus, let $1 = d_1 < d_2 < \cdots < d_{2^{\omega(n)}} = n$ be the sequence of divisors of $n$. We write
$$
\mathcal{J}_\rho(n) := \sum_{i=1}^{2^{\omega(n)}}\frac{i^{\rho}}{d_i} \qquad (\rho \in \mathbb{R}_{\ge 0}).
$$
Now, we deduce from \eqref{relimp} that
\begin{eqnarray*}
|L_t(n)| & \le & n\sum_{i=1}^{2^{\omega(n)}}\frac{i^t-(i-1)^t}{d_i}\\
& \le & tn\mathcal{J}_{t-1}(n).
\end{eqnarray*}
Also, for any integer value of $\rho \ge 0$ and $\sigma \in \mathbb{R}_{\ge 0}$, we can write
\begin{eqnarray*}
\mathcal{J}_\rho(n) & = & \sum_{d \mid n}\frac{\tau^\rho(n,d)}{d} \le \sum_{d \mid n}\frac{1}{d}\biggl(\sum_{e \mid n} \left(\frac{d}{e}\right)^{\sigma}\biggr)^\rho\\
& = & \sum_{d \mid n}d^{\rho\sigma-1}\biggl(\sum_{e \mid n} \frac{1}{e^\sigma}\biggr)^\rho = \prod_{p \mid n}(1+p^{\rho\sigma-1})\left(1+\frac{1}{p^\sigma}\right)^\rho\\
& = & \prod_{p \mid n}\biggl(1+\frac{1}{p^{\frac{1}{\rho+1}}}\biggr)^{\rho+1} = \eta^{\rho+1}(n,\rho+1)
\end{eqnarray*}
where we have used $\sigma=\frac{1}{\rho+1}$. We thus get to
$$
|L_t(n)| \le tn\eta^t(n,t) \le tn\eta^t(n_{\omega(n)},t).
$$
The results then follow from Lemma \ref{comp}. The proof is complete.

\begin{remark}
The function $\mathcal{J}_\rho(\cdot)$ satisfies
$$
\mathcal{J}_\rho(n) \le \mathcal{J}_\rho(n_{\omega(n)}) \qquad (n \in \mathbb{N}).
$$
Indeed, let $n=q_1\cdots q_{\omega(n)}$ with $q_1 < \cdots < q_{\omega(n)}$ the factorization of $n$. Thus, since $p_{r_1}\cdots p_{r_l} \le q_{r_1}\cdots q_{r_l}$, it follows that the $i$-th term  in the ordered sequence of divisors of $n_{\omega(n)}$ is at most equal to the $i$-th term in the corresponding sequence for $n$.
\end{remark}

\section{Proof of Corollary \ref{cor:1}}

Since the function $M(n,z)$ is constant for $z \in [j,j+1)$ ($j \in \mathbb{Z}_{\ge 0}$) and $M(n,n)=0$ for $n \ge 2$, we deduce that
$$
\mathcal{H}_{\theta}(n) \le \frac{L_t(n)}{2^{t\theta\omega(n)}} \qquad (\mbox{if}\ 2 \mid t).
$$
From Theorem \ref{thm:2} and the hypothesis $\omega(n) \ge 56$, we have
$$
\mathcal{H}_{\theta}(n) \le n\exp\biggl(t\omega(n)\biggl(\frac{1}{\omega(n)^{1/t}}-\theta\log 2\biggr)\biggr).
$$
Now, the idea is simply to optimize this last inequality over the even integers $t \ge 2$. Our strategy is to find the exact value $t_0 \in (1,\infty)$ and to estimate the variation caused by $t=t_0+\xi$ with $|\xi| \le 1$. We write
$$
f(x):=x\biggl(\theta\log 2-\frac{1}{\omega(n)^{1/x}}\biggr),
$$
so that
$$
f'(x)=\theta\log 2-\frac{1}{\omega(n)^{1/x}}-\frac{\log \omega(n)}{x\omega(n)^{1/x}} \quad \mbox{and} \quad f''(x)=-\frac{\log^2\omega(n)}{x^3\omega(n)^{1/x}}.
$$
Let's write $t_0=c\log \omega(n)$. We have $f'(t_0)=0$ if and only if
\begin{eqnarray*}
\omega(n)^{1/t_0}\theta\log 2 = 1 + \frac{\log \omega(n)}{t_0} & \Leftrightarrow & \exp(1/c)\theta\log 2 = 1 + \frac{1}{c}\\
& \Leftrightarrow & \frac{\exp(1+1/c)}{1+1/c} = \frac{e}{\theta\log 2}\\
& \Leftrightarrow & 1+\frac{1}{c} = \alpha = W\biggl(\frac{e}{\theta\log 2}\biggr)
\end{eqnarray*}
so that $t_0=\frac{\log \omega(n)}{\alpha-1}$ which is strictly larger than 1 by hypothesis. We verify that $f(t_0)=\frac{\theta\log 2}{\alpha}\log \omega(n)$. Now, we have
\begin{eqnarray*}
|f(t)-f(t_0)| & \le & \sup_{z \in (t_0-1,t_0+1)}|f'(z)| = \sup_{z \in (t_0-1,t_0+1)}|f'(z)-f'(t_0)|\\
& \le & \sup_{\zeta \in (t_0-1,t_0+1)}|f''(\zeta)|
\end{eqnarray*}
from the mean value theorem applied twice. The result follows from the estimate
$$
\sup_{\zeta \in (t_0-1,t_0+1)}|f''(\zeta)| < \frac{\log^2\omega(n)}{(t_0-1)^3\omega(n)^{1/(t_0+1)}}
$$
which holds since $t_0 > 1$. The proof is complete.

\section{Proof of Theorem \ref{thm:3}}

We assume throughout the proof that $s \ge 2$ is a fixed integer. The function $E_s(n)$ is multiplicative, so it will be enough to show that
{\small$$
\tau(p^\alpha)^{2s-1}\left(\frac{A(2s-1,s-1)}{(2s-1)!}\right) < E_s(p^\alpha) \le \tau(p^\alpha)^{2s-1}\left(\frac{1}{2^{2s-1}}\binom{2s}{s}\right)\quad(\alpha \ge 1)
$$}\par
\noindent for any prime $p$.

Now, for a fixed prime $p$, the function $E_s(p^\alpha)$ counts the number of solutions to the system
$$
R_s(\alpha):=|\{(\alpha_1,\dots,\alpha_{2s}) \in \{0,\dots,\alpha\}^{2s}:\ \alpha_1+\cdots+\alpha_s = \alpha_{s+1}+\cdots+\alpha_{2s}\}|.
$$
We clearly have $R_s(0)=1$ and also
$$
R_s(1):=\sum_{j=0}^{s}\binom{s}{j}^2=\binom{2s}{s},
$$
an identity that follows from $(x+1)^{2s}=(x+1)^s(x+1)^s$.
In general, $R_s(\alpha)$ is the coefficient of $x^0$ in the expansion of
\begin{eqnarray*}
N_{s,\alpha}(x) & := & \biggl((1+x+\cdots+x^{\alpha})\left(1+\frac{1}{x}+\cdots+\frac{1}{x^\alpha}\right)\biggr)^s\\
& = & \left(\frac{x^{\alpha+2}+x^{-\alpha}-2x}{(1-x)^2}\right)^s\\
& = & (x^{\alpha+2}+x^{-\alpha}-2x)^s\sum_{j \ge 0}\binom{2s-1+j}{2s-1}x^j
\end{eqnarray*}
from which we deduce that
{\small\begin{eqnarray*}
R_s(\alpha) & = & \sum_{\substack{a+b+c=s \\ (\alpha+2)a-\alpha b+c \le 0}}\binom{s}{a,b,c}(-2)^{c}\binom{2s-1-(\alpha+2)a+\alpha b-c}{2s-1}\\
& = & \sum_{i=0}^{s}\sum_{\substack{0 \le j \le s-i \\ (\alpha+1)(i-j)+s \le 0}}\binom{s}{i,j,s-i-j}(-2)^{s-i-j}\binom{(\alpha+1)(j-i)+s-1}{2s-1}\\
& = & \sum_{i=0}^{s}\sum_{\substack{-i \le v \le s \\ -(\alpha+1)v+s \le 0}}\binom{s}{i,v+i,s-v-2i}(-2)^{s-v-2i}\binom{(\alpha+1)v+s-1}{2s-1}\\
& = & \sum_{v = 1}^{s}(-1)^{s-v}\binom{(\alpha+1)v+s-1}{2s-1}\sum_{i=0}^{s}\binom{s}{i,v+i,s-v-2i}2^{s-v-2i}\\
& = & \sum_{v = 1}^{s}(-1)^{s-v}\binom{2s}{s-v}\binom{(\alpha+1)v+s-1}{2s-1} =: P_s(\alpha+1).
\end{eqnarray*}}\par
\noindent The last expression follows from
$$
\sum_{i=0}^{s}\binom{s}{i,v+i,s-v-2i}2^{s-v-2i}=\binom{2s}{s-v}
$$
that can be shown by using the identity $(x+1)^{2s}=((x^2+2x)+1)^{s}$.

Now, the idea of the proof is to show that $P_s(x)$ is an odd function with strictly positive coefficients (of $x^j$ with $j$ odd) so that it is clear that the function $\frac{P_s(x)}{x^{2s-1}}$ has a strictly negative derivative. With this in mind, we write
\begin{eqnarray*}
P_s(x) & = & \sum_{v = 1}^{s}(-1)^{s-v}\binom{2s}{s-v}\binom{xv+s-1}{2s-1}\\
& = & \frac{x}{(2s-1)!}\sum_{v = 1}^{s}(-1)^{s-v}\binom{2s}{s-v}v\prod_{j=1}^{s-1}(v^2x^2-j^2)
\end{eqnarray*}
so that we turn to
\begin{eqnarray*}
\frac{(2s-1)!P_s(ix)}{ix} & = & \sum_{v = 1}^{s}(-1)^{v-1}\binom{2s}{s-v}v\prod_{j=1}^{s-1}(v^2x^2+j^2)\\
& = & \sum_{v = 1}^{s}(-1)^{v-1}\binom{2s}{s-v}v\sum_{r=0}^{s-1}b_{2r}v^{2r}x^{2r}\\
& = & \sum_{r=0}^{s-1}b_{2r}x^{2r}\sum_{v = 1}^{s}(-1)^{v-1}\binom{2s}{s-v}v^{2r+1}
\end{eqnarray*}
where each $b_{2r}$ with $r=0,\dots,s-1$ is a strictly positive coefficient. By writing
$$
c_r:=\sum_{v = 1}^{s}(-1)^{v-1}\binom{2s}{s-v}v^{2r+1},
$$
we deduce that it is enough to show that $\sgn{c_r}=(-1)^{r}$. We write
\begin{eqnarray*}
c_r & = & -\sum_{v = 0}^{s}(-1)^{s-v}\binom{2s}{v}(s-v)^{2r+1}\\
& = & (-1)^{s+1}\sum_{v = 0}^{2s}(-1)^{2s-v}\binom{2s}{v}Q_{s,2r+1}(v)
\end{eqnarray*}
where $Q_{s,2r+1}(x)$ is the Lagrange polynomial of degree at most $2s$ for which
$$
Q_{s,2r+1}(v) = \left\{\begin{array}{ll} (s-v)^{2r+1} & v=0,\dots,s,\\ 0 & v=s+1,\dots,2s. \end{array}\right.
$$
From Lemma \ref{surj} and the remark in the proof, we deduce that $\sgn{c_r}=(-1)^{s+1}\sgn{e_{2s}}$ where $e_{2s}$ is the leading term of $Q_{s,2r+1}(x)$. Now, since
$$
Q_{s,2r+1}(x+s)+Q_{s,2r+1}(-x+s)=F_{s,2r+1}(x)
$$
the function in Lemma \ref{sign}. We deduce that $\sgn{2e_{2s}}=\sgn{e_{2s}}=(-1)^{s-r-1}$ so that $\sgn{c_r}=(-1)^r$ as wanted. The proof is complete.

\section{Concluding remark}

Let's consider the quantity
$$
T_s(\alpha):=|\{(\alpha_1,\dots,\alpha_{s}) \in \{-\alpha,\dots,\alpha\}^{s}:\ \alpha_1+\cdots+\alpha_s=0\}|.
$$
The methods used in the proof of Theorem \ref{thm:3} also apply to $T_s(\alpha)$. That is, the function $\frac{T_s(\alpha)}{(2\alpha+1)^{s-1}}$ is strictly decreasing for integer values of $\alpha \ge 0$ when $s \ge 3$, it is constant for $s=1$ or 2.

{\sc D\'epartement de math\'ematiques et de statistique, Universit\'e Laval, Pavillon Alexandre-Vachon, 1045 Avenue de la M\'edecine, Qu\'ebec, QC G1V 0A6} \\
{\it E-mail address:} {\tt Patrick.Letendre.1@ulaval.ca}

\end{document}